\newtheorem{theorem}{Theorem}[section]
\newtheorem{lemma}[theorem]{Lemma}
\theoremstyle{definition}
\newtheorem{definition}[theorem]{Definition}
\newtheorem{example}[theorem]{Example}
\newtheorem{proposition}[theorem]{Proposition}
\newtheorem{corollary}[theorem]{Corollary}
\newtheorem{remark}[theorem]{Remark}
\theoremstyle{remark}
\newcommand{\be}{\begin{equation}}
\newcommand{\ee}{\end{equation}}
\numberwithin{equation}{section}
\begin{document}

\title{On some applications of Gauduchon metrics}

\author{Ping Li}
\address{School of Mathematical Sciences, Tongji University, Shanghai 200092, China}

\email{pingli@tongji.edu.cn\\
pinglimath@gmail.com}
\thanks{The author was partially supported by the National
Natural Science Foundation of China (Grant No. 11722109).}

 \subjclass[2010]{53C55, 32Q10, 32L05.}


\keywords{Gauduchon metric, holomorphic sectional curvature, Kodaira dimension, uniruledness, Moishezon manifold, Fujiki's class $\mathcal{C}$, mean curvature form.}

\begin{abstract}
We apply the existence and special properties of Gauduchon metrics to give several applications. The first one is concerned with the implications of algebro-geometric nature under the existence of a Hermitian metric with nonnegative holomorphic sectional curvature. The second one is to show the non-existence of holomorphic sections on Hermitian vector bundles under certain conditions. The third one is to give a restriction on the $\partial\bar{\partial}$-closedness of some real $(n-1,n-1)$-forms on compact complex manifolds.
\end{abstract}

\maketitle

\section{Introduction and main results}\label{sectionintroduction}
Throughout this article denote by $(M^{n},\omega)$ a compact connected complex manifold of complex dimension $n\geq2$ endowed with a Hermitian metric whose associated positive $(1,1)$-form is $\omega$. By abuse of notation, $\omega$ itself is also called the Hermitian metric. Denote by $$\{\omega\}:=\big\{e^u\omega~|~u\in C^{\infty}(M;\mathbb{R})\big\}$$
the set of the conformal class of $\omega$.

The Hermitian metric $\omega$ is called \emph{K\"{a}hler, balanced} or \emph{Gauduchon} if $d\omega=0$, $d\omega^{n-1}=0$ or  $\partial\bar{\partial}\omega^{n-1}=0$ respectively.
On general compact complex manifolds with $n\geq 2$ the former two metrics may not exist. For example, Calabi-Eckmann manifolds $S^{2p+1}\times S^{2q+1}$ ($p+q\geq1$) carry no K\"{a}hler or balanced metrics (\cite[\S 4]{Mi}). Nevertheless, a classical result of Gauduchon (\cite{Ga1}) states that every Hermitian metric is conformal to a Gauduchon metric, which is unique up to rescaling when $n\geq2$.

The aim of this article is to discuss some applications related to this existence result and some special properties of Gauduchon metrics. In what follows we shall describe our main results.

A compact complex manifold is called \emph{uniruled} if it can be covered by rational curves. $M$ is said to have \emph{negative} Kodaira dimension, denoted by $\kappa(M)<0$, if $H^0(M,mK_M)=0$ for any positive integer $m$, i.e., any positive tensor power of the canonical line bundle $K_M$ has no nontrivial holomorphic sections. It is known that for a projective manifold uniruledness implies negative Kodaira dimension (\cite[p. 94]{De}), which is conjectured to be true for compact K\"{a}hler or even general compact complex manifolds. One major open problem in the classification theory of projective manifolds is that the converse also should be true (\cite[Conjecture 0.1]{BDPP}).

The notions of uniruledness and Kodaira dimension are of algebro-geometric nature and so it is natural to find differential-geometric criterions to characterize them.
In the two influential problem lists, S.-T. Yau asked that (\cite[Problem 47]{Yau2}, \cite[Problem 67]{Yau3}), if a compact complex manifold admits a K\"{a}hler metric with positive holomorphic sectional curvature (``HSC'' for short), whether or not it is rationally connected or uniruled. Recently this was affirmatively confirmed by X.-K. Yang (\cite{Ya2}). The next natural question is whether this remains true for (non-K\"{a}hler) Hermitian metrics. Building on ideas of Balas (\cite{Ba1}, \cite{Ba2}) Yang treated this question in an earlier article \cite{Ya1} by showing that the quasi-positivity of holomorphic sectional curvature of a Hermitian metric implies negative Kodaira dimension (\cite[Thm 1.2]{Ya1}).

\emph{Our first main result} is the following Theorem \ref{mainresult0}, which extends \cite[Thm 1.2]{Ya1} to the nonnegative version.
\begin{theorem}\label{mainresult0}
Let $(M^n,\omega)$ be a compact Hermitian manifold with $HSC(\omega)\geq0$. Then the Kodaira dimension $\kappa(M)\leq0$, where the equality $\kappa(M)=0$ occurs if and only if $HSC(\omega)\equiv0$, $\omega$ is conformally balanced, and $K_M$ is a holomorphic torsion, i.e., $mK_M=\mathcal{O}_M$ for some $m\in\mathbb{Z}_{>0}$.
\end{theorem}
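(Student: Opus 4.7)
The plan is to extend Yang's argument from \cite[Thm 1.2]{Ya1} to the boundary case $HSC(\omega)\geq 0$ and to extract the equality characterization, with the Gauduchon representative of $\{\omega\}$ providing the key integration-by-parts device. Assume for contradiction that $\kappa(M)\geq 0$, so some $m\in\mathbb{Z}_{>0}$ admits a nonzero section $s\in H^0(M,mK_M)$. Let $\omega_G=e^{u}\omega\in\{\omega\}$ be a Gauduchon metric (unique up to positive scaling). For any smooth Hermitian metric $h$ on $mK_M$, the Poincar\'e--Lelong formula gives $\sqrt{-1}\,\partial\bar\partial\log|s|_h^2=-2\pi\,c_1(mK_M,h)+2\pi[Z_s]$. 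Wedging with $\omega_G^{n-1}/(n-1)!$ and integrating, the left-hand side vanishes because $\partial\bar\partial\omega_G^{n-1}=0$, yielding
\[
m\,\deg_{\omega_G}(K_M)\;=\;\int_{Z_s}\frac{\omega_G^{n-1}}{(n-1)!}\;\geq\;0,
\]
with equality if and only if $Z_s=\emptyset$, i.e.\ $s$ is nowhere vanishing and $mK_M\cong\mathcal{O}_M$.

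For the reverse inequality I invoke a pointwise averaging identity in the spirit of Berger--Balas--Yang \cite{Ba1,Ba2,Ya1}: the mean of $HSC(\omega)$ over the unit sphere in $T_x^{1,0}M$ equals a positive constant times a distinguished combination of the Chern-type scalar curvatures of $\omega$. Under the hypothesis $HSC(\omega)\geq 0$ this combination is pointwise nonnegative. Expanding $\text{Ric}(\omega)\wedge\omega_G^{n-1}=n^{-1}e^{-u}\,s_{\mathrm{Ch}}(\omega)\,\omega_G^{n}$ and using the $\partial\bar\partial$-invariance of $\omega_G^{n-1}$ (which makes $\int_M\text{Ric}(\omega)\wedge\omega_G^{n-1}$ depend only on $c_1(K_M)$) together with a torsion identity available on the Gauduchon metric $\omega_G$ to match the averaged scalar combination against $s_{\mathrm{Ch}}(\omega)$, one concludes $\int_M\text{Ric}(\omega)\wedge\omega_G^{n-1}\geq 0$, i.e.\ $\deg_{\omega_G}(K_M)\leq 0$. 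Combined with the first paragraph this forces $\deg_{\omega_G}(K_M)=0$; both estimates are saturated, so $Z_s=\emptyset$ (hence $mK_M\cong\mathcal{O}_M$, making $K_M$ a holomorphic torsion) and the averaging inequality is pointwise saturated (hence $HSC(\omega)\equiv 0$).

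For the conformally balanced conclusion, Gauduchon's uniqueness in $\{\omega\}$ reduces ``$\omega$ is conformally balanced'' to ``$\omega_G$ is balanced,'' i.e.\ $d\omega_G^{n-1}=0$. The nowhere-vanishing section $s$ induces a flat metric on $mK_M$, which pins $\text{Ric}(\omega_G)$ down to a specific $\sqrt{-1}\partial\bar\partial$-exact form; combined with the pointwise vanishing $HSC(\omega)\equiv 0$, the saturated averaging identity constrains the torsion components of $\omega_G$ enough to force its Lee form to vanish, giving $d\omega_G^{n-1}=0$. The principal technical obstacle I foresee is precisely the averaging step in the second paragraph: assembling the correct Balas--Yang pointwise identity and matching it, via a torsion identity for the Gauduchon metric $\omega_G$, to the Chern scalar curvature that integrates to $\deg_{\omega_G}(K_M)$ with the right sign while $\omega$ itself is generally not Gauduchon; the extraction of balancedness of $\omega_G$ from the pointwise vanishing of $HSC$ is the second delicate ingredient.
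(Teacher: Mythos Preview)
Your overall strategy is sound and runs parallel to the paper's, with one genuine simplification and one muddled step. The paper proceeds via the Weitzenb\"ock formula (\ref{weitzenbock formula}) on $mK_M$ against a conformal representative $\widetilde{\omega}\in\{\omega\}$ whose Chern scalar curvature has constant sign (Lemma~\ref{scalar curvature}); you instead use Poincar\'e--Lelong and the Gauduchon degree. Both routes rest on the same core inequality, namely (\ref{hsc implies gauduchon}):
\[
\int_M S_{\omega_G}\,\omega_G^n \;=\; \tfrac12\int_M f_0\,(S_\omega+\widehat S_\omega)\,\omega^n \;+\; \tfrac14\int_M |\theta_G|^2\,\omega_G^n,
\]
which is exactly the ``torsion identity available on the Gauduchon metric'' you are reaching for; its two ingredients are (\ref{total scalar curvature relation}) and Gauduchon's formula (\ref{inequality}). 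Once you write this down explicitly, your second paragraph goes through: $HSC(\omega)\ge 0$ gives $S_\omega+\widehat S_\omega\ge 0$ by the Berger averaging (\ref{average}), hence $\int_M S_{\omega_G}\omega_G^n\ge 0$, i.e.\ $\deg_{\omega_G}(K_M)\le 0$.

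Your extraction of the torsion property of $K_M$ is in fact cleaner than the paper's: from $\deg_{\omega_G}(K_M)=0$ and Poincar\'e--Lelong you read off $Z_s=\emptyset$ and $mK_M\cong\mathcal O_M$ directly, whereas the paper appeals to \cite[Thm~1.1]{Ya3} and \cite[Thm~1.4]{To15}. Two corrections, however. First, drop ``for contradiction'': you never reach one. The logic is simply that $\kappa(M)\ge 0$ forces $\kappa(M)=0$ together with the three equality conditions, so $\kappa(M)\le 0$ in all cases. Second, your third paragraph is aimed at the wrong target. Balancedness of $\omega_G$ does not come from the flat metric on $mK_M$ or from pinning down $\text{Ric}(\omega_G)$; it falls out immediately from saturation of the displayed identity above: $\int_M S_{\omega_G}\omega_G^n=0$ with both right-hand terms nonnegative forces $\int_M|\theta_G|^2\,\omega_G^n=0$, hence $\theta_G\equiv 0$ and $d\omega_G^{n-1}=0$. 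The same saturation gives $S_\omega+\widehat S_\omega\equiv 0$ pointwise (since $f_0>0$), and then (\ref{average}) yields $HSC(\omega)\equiv 0$. No separate argument is needed.
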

When $n=2$ or $3$, some results in Theorem \ref{mainresult0} were also obtained in \cite[Thm 1.2]{Ba2}.

Theorem \ref{mainresult0} immediately yields
\begin{corollary}~
\begin{enumerate}
\item
If $\kappa(M)\geq1$, $M$ cannot carry a Hermitian metric with nonnegative HSC.
\item
If $\kappa(M)\geq0$, any non-conformally balanced metric cannot admit a Hermitian metric with nonnegative HSC.
\end{enumerate}
\end{corollary}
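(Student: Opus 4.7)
The plan is to deduce both parts directly from Theorem \ref{mainresult0} by contrapositive. Since that theorem already packages the full content of the corollary, no new ideas are needed and I expect each part to be a two-line argument.

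For part (1), I would argue by contradiction: if $M$ were to carry some Hermitian metric $\omega$ with $HSC(\omega)\geq 0$, then Theorem \ref{mainresult0} would force $\kappa(M)\leq 0$, contradicting the standing hypothesis $\kappa(M)\geq 1$. For part (2), I would again suppose, toward a contradiction, that $M$ carries a non-conformally-balanced Hermitian metric $\omega$ with $HSC(\omega)\geq 0$. Theorem \ref{mainresult0} then yields $\kappa(M)\leq 0$, which combined with $\kappa(M)\geq 0$ forces $\kappa(M)=0$; the equality clause of the theorem then demands that $\omega$ be conformally balanced, contradicting our choice of $\omega$.

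The only place where nontrivial content enters is Theorem \ref{mainresult0} itself: part (2) relies crucially on its rigidity/equality statement, which pins down $\kappa(M)=0$ by conformal balancedness of $\omega$ together with holomorphic torsion of $K_M$. Once that theorem is in hand there is no real obstacle, so the ``main difficulty'' of the corollary is borne entirely by the preceding theorem and the corollary itself is purely formal.
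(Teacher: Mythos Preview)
Your proposal is correct and matches the paper's approach exactly: the paper simply states that the corollary ``immediately yields'' from Theorem~\ref{mainresult0}, and your two contrapositive arguments are precisely the intended unpacking of that remark.
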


Recall that a compact complex manifold is called \emph{Moishezon} (resp. \emph{in Fujiki's class $\mathcal{C}$}) if it is bimeromorphic to a projective (resp. compact K\"{a}hler) manifold. Combining the proof in Theorem \ref{mainresult0} with a recent result in \cite{CRS}, Yau's aforementioned question indeed holds true for some special compact (non-K\"{a}hler) Hermitian manifolds.
\begin{theorem}\label{mainresult1}
Suppose that $M$ is either an $n$-dimensional Moishezon manifold or a $3$-dimensional compact complex manifold in Fujiki's class $\mathcal{C}$. Then $M$ is uniruled provided one of the following two conditions can be satisfied.
\begin{enumerate}
\item
 $M$ admits a Hermitian metric $\omega$ with quasi-positive $HSC(\omega)$.
\item
$M$ admits a non-conformally balanced metric $\omega$ with nonnegative $HSC(\omega)$ and $\omega$ is not conformal to a balanced metric.
\end{enumerate}
\end{theorem}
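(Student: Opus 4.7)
The overall strategy is to upgrade the conclusion of Theorem \ref{mainresult0} from $\kappa(M)\le 0$ to the stronger assertion that $K_M$ has strictly negative degree against a Gauduchon $(n-1,n-1)$-class, and then to invoke the recent characterization of uniruledness of \cite{CRS}. That characterization extends the Boucksom--Demailly--Paun--Peternell criterion to Moishezon manifolds (and, in dimension three, to manifolds in Fujiki's class $\mathcal{C}$, with the K\"ahler 3-fold input provided by H\"oring--Peternell) and says, in the form relevant here, that $M$ is uniruled whenever $K_M$ is not pseudo-effective, a property detected by the negativity of its degree against a suitable Gauduchon class.

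The first step is to pass to the Gauduchon representative $\omega_G\in\{\omega\}$, which exists and is essentially unique by Gauduchon's theorem, and to revisit the integral identity that underlies Theorem \ref{mainresult0}. Applying the Berger-type averaging of $HSC(\omega_G)$ over the unit tangent sphere and integrating the Chern--Ricci form against $\omega_G^{n-1}$ should yield a schematic identity of the form
\be
-\int_M c_1(K_M,\omega_G)\wedge\omega_G^{n-1}\;=\;C_n\!\int_M HSC(\omega_G)\,\omega_G^n\;+\;B(\omega_G),
\ee
where the ``boundary contribution'' $B(\omega_G)$ is built from $d\omega_G^{n-1}$, equivalently the mean curvature form of $\omega_G$, and vanishes precisely when $\omega_G$ is balanced.

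Under hypothesis (1), $HSC(\omega_G)\ge 0$ is strict at some point, so the first term on the right is strictly positive, while $B(\omega_G)\ge 0$, giving $\int_M c_1(K_M)\wedge\omega_G^{n-1}<0$. Under hypothesis (2), that first term is merely non-negative, but because $\omega$ (hence $\omega_G$) is not conformal to any balanced metric the term $B(\omega_G)$ is strictly positive; again $\int_M c_1(K_M)\wedge\omega_G^{n-1}<0$. In either case $K_M$ is not Gauduchon-pseudo-effective, and the criterion of \cite{CRS} produces a covering family of rational curves, showing that $M$ is uniruled.

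The main obstacle I anticipate is twofold. First, I must extract from the proof of Theorem \ref{mainresult0} the precise shape and sign of the boundary term $B(\omega_G)$ and verify that on a non-conformally-balanced manifold it contributes strictly positively, not merely non-negatively; this is the only place where the second clause of hypothesis (2) is genuinely used. Second, the Gauduchon-degree negativity obtained above must be matched with the exact notion of non-pseudo-effectivity required by \cite{CRS}, which involves bimeromorphic transfer to a projective or K\"ahler model together with the bimeromorphic invariance of uniruledness; some care is needed in the Moishezon case since Chern classes and Gauduchon degrees have to be manipulated in Bott--Chern cohomology before passing to the algebraic model.
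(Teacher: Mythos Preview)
Your overall strategy is exactly the paper's: show that either hypothesis forces the Gauduchon sign of $\{\omega\}$ to be strictly positive, then invoke \cite[Thm D]{CRS} (which packages the BDPP/Brunella inputs and the bimeromorphic invariance you mention). Your second anticipated obstacle is therefore a non-issue: the paper simply quotes the equivalence ``positive Gauduchon sign $\Leftrightarrow$ $K_M$ not pseudo-effective'' (via Lamari/Yang) and then \cite{CRS}, with no delicate Bott--Chern bookkeeping required on your side.

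There is, however, a genuine slip in your first step. You propose to Berger-average $HSC(\omega_G)$, but holomorphic sectional curvature is \emph{not} conformally invariant, and the hypotheses of the theorem concern $HSC(\omega)$, not $HSC(\omega_G)$. So from $HSC(\omega)\ge 0$ (or quasi-positive) you cannot conclude anything about the sign of $\int_M HSC(\omega_G)\,\omega_G^n$, and your displayed schematic identity, as written with $\omega_G$ on the right, does not give the positivity you want. The paper separates the two steps: Berger averaging is applied to the \emph{original} metric $\omega$ (Lemma \ref{lemma1}), yielding the sign of $S_\omega+\widehat S_\omega$; then the conformal relation between the total scalar curvatures of $\omega$ and $\omega_0=\omega_G$ (equations (\ref{total scalar curvature relation}) and (\ref{inequality})) produces the key identity
\[
\int_M S_{\omega_0}\,\omega_0^n \;=\; \tfrac12\int_M f_0\,(S_\omega+\widehat S_\omega)\,\omega^n \;+\; \tfrac14\int_M |\theta_0|^2\,\omega_0^n,
\]
with the positive conformal factor $f_0$ appearing as a weight. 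This is the precise form of your ``$C_n\cdot(\text{HSC term})+B$'', and your intuition about $B$ is correct: the torsion term $\tfrac14\int|\theta_0|^2\omega_0^n$ is nonnegative always and strictly positive exactly when $\omega_0$ is not balanced, i.e.\ when $\omega$ is not conformally balanced. Once you route the argument through $\omega$ rather than $\omega_G$ at the averaging stage, your proof goes through and coincides with the paper's. (Minor point: for the K\"ahler $3$-fold input the paper cites Brunella rather than H\"oring--Peternell.)
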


In a classical work (\cite{Yau1}) Yau showed that the existence of a K\"{a}hler metric with positive total scalar curvature on $M$ implies $\kappa(M)<0$, and on a compact complex surface is equivalent to the uniruledness. Gauduchon proved, in another classical work (\cite{Ga2}), that the existence of a Gauduchon metric with positive total Chern scalar curvature implies $\kappa(M)<0$ and thus improved Yau's result. Using Boucksom, Demailly, P\u{a}un and Peternell's criterion for uniruled projective manifolds (\cite{BDPP}), Heier and Wong observed that (\cite{HW}) a projective manifold equipped with a K\"{a}hler metric with positive total scalar curvature is uniruled. Chiose, Rasdeaconu and Suvaina obtained in \cite{CRS} that a compact Moishezon manifold is uniruled if and only if it admits a Gauduchon metric with positive total Chern scalar curvature. Very recently Yang systematically investigated in \cite{Ya3} the relations among total Chern scalar curvature of Gauduchon metrics, Kodaira dimension and the pseudo-effectiveness of canonical line bundles.

\emph{Our second main result} is to extend Gauduchon's aforementioned result to the vector bundle version, which is related to a claim in Kobayashi's book (\cite[p. 57, Thm 1.30]{Ko}). Before stating the result, let us fix some more notation.

Let $(E^r,h)$ be a Hermitian holomorphic vector bundle of rank $r$ on ($M^n$,$\omega$). The starting point in \cite[Chapter 3]{Ko} entitled ``Vanishing Theorems" is that the quasi-negativity of the mean curvature form $K$ of $(E^r,h)$ (details on $K$ can be found in Section \ref{section2}) implies the nonexistence of nontrivial holomorphic sections on $E$ (\cite[p. 52]{Ko}). Let
\be\label{greatest eigenvalue}\gamma(x):=\text{the greatest eigenvalue of $K$ at $x$},\qquad x\in M.\ee
This $\gamma$ is in general a \emph{continuous} function and may not be smooth.

The following result states that, if the underlying metric is Gauduchon and the function $\gamma$ is smooth, the condition of $K$ being quasi-negative can be relaxed to the negativity of the total $\gamma$ on $M$.
\begin{theorem}\label{mainresult2}
Let $(E,h)$ be a Hermitian holomorphic vector bundle over $(M^n,\omega_0)$ and $\gamma$ as in (\ref{greatest eigenvalue}). If $\omega_0$ is Gauduchon, $\gamma$ smooth and
\be\label{integral condition}\int_M\gamma\cdot\omega^n_0<0,\ee
then $E$ admits no nontrivial holomorphic sections.
\end{theorem}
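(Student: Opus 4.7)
The plan is to reduce the integral assumption on $\gamma$ to the pointwise negativity case handled by Kobayashi, via a conformal change of the fiber metric, and then to exploit the Gauduchon condition through a standard Bochner-type integration by parts. The starting Bochner identity that I will use is the following: for any holomorphic section $s$ of $(E,h)$ and the $(1,0)$-part $D'$ of the Chern connection, a local computation yields
$$\sqrt{-1}\,\partial\bar\partial\,|s|_h^2\wedge\omega_0^{n-1}\;=\;\tfrac{1}{n}\bigl(|D's|_{h,\omega_0}^2-h(Ks,s)\bigr)\,\omega_0^n.$$
Integrating over $M$ and using the Gauduchon hypothesis together with Stokes, so that $\int_M\sqrt{-1}\partial\bar\partial|s|_h^2\wedge\omega_0^{n-1}=\int_M|s|_h^2\,\sqrt{-1}\partial\bar\partial\omega_0^{n-1}=0$, the identity
$$\int_M|D's|_{h,\omega_0}^2\,\omega_0^n\;=\;\int_M h(Ks,s)\,\omega_0^n$$
will emerge.

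The next step is to invoke the Gauduchon Laplacian $\Delta_0$, defined by $\sqrt{-1}\partial\bar\partial f\wedge\omega_0^{n-1}=\tfrac{1}{n}(\Delta_0 f)\,\omega_0^n$. The same Stokes argument shows $\int_M\Delta_0 f\,\omega_0^n=0$ for every smooth $f$, so the range of $\Delta_0$ lies in the mean-zero subspace of $C^\infty(M,\mathbb{R})$. Conversely, $\Delta_0$ is a second-order elliptic operator whose kernel is the constants, and by the standard theory underlying Gauduchon's existence theorem its range is exactly the mean-zero subspace. Setting $\bar\gamma:=\int_M\gamma\,\omega_0^n\big/\int_M\omega_0^n$, which is strictly negative by (\ref{integral condition}), the smooth function $\bar\gamma-\gamma$ has mean zero, so I can solve $\Delta_0 f=\bar\gamma-\gamma$ for some smooth real $f$ on $M$.

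The final move is to replace $h$ by $\tilde h:=e^{-f}h$. A direct computation of the Chern connection under a conformal rescaling gives $\tilde\Theta=\Theta+\partial\bar\partial f\cdot\mathrm{Id}_E$, so $\tilde K=K+(\Delta_0 f)\cdot\mathrm{Id}_E$; in particular the greatest eigenvalue $\tilde\gamma$ of $\tilde K$ equals $\gamma+\Delta_0 f\equiv\bar\gamma<0$, a strictly negative constant everywhere on $M$. Applying the Bochner identity of the first paragraph to $(E,\tilde h)$ and any holomorphic section $s$ then yields
$$0\;\leq\;\int_M|D's|_{\tilde h,\omega_0}^2\,\omega_0^n\;=\;\int_M\tilde h(\tilde K s,s)\,\omega_0^n\;\leq\;\bar\gamma\int_M|s|_{\tilde h}^2\,\omega_0^n\;\leq\;0,$$
forcing $s\equiv0$. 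The main technical ingredient is the surjectivity of $\Delta_0$ onto the mean-zero functions, which crucially uses $\partial\bar\partial\omega_0^{n-1}=0$; everything else is a careful tracking of signs together with Kobayashi's standard Bochner estimate. Smoothness of $\gamma$ enters precisely to guarantee that $\bar\gamma-\gamma\in C^\infty(M,\mathbb{R})$ so that the elliptic equation for $f$ is solvable within the smooth category.
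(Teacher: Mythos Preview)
Your argument is correct and follows essentially the same route as the paper: solve $\Delta_{c,\omega_0}u=-(\gamma-\bar\gamma)$ using the Gauduchon condition (your $\Delta_0=-\Delta_{c,\omega_0}$ and $f=-u$ up to a constant), conformally rescale the fiber metric to make the new mean curvature endomorphism strictly negative with greatest eigenvalue the constant $\bar\gamma<0$, and conclude. The only cosmetic difference is that the paper then invokes Kobayashi's pointwise maximum-principle vanishing \cite[p.~52]{Ko} as a black box, whereas you re-derive it by integrating the Bochner identity against $\omega_0^{n-1}$ and using the Gauduchon condition a second time; both are valid and neither buys anything extra.
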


When applying Theorem \ref{mainresult2} to the line bundle $mK_M$ with the induced metric it turns out that $\gamma=-mS_{\omega_0}$ (see Example \ref{example of line bundle}), where $S_{\omega_0}$ is the Chern scalar curvature of $\omega_0$. Therefore we have the following consequence due to Gauduchon (\cite[p. 134]{Ga2}).

\begin{corollary}[Gauduchon]\label{coro of main result2}
If a compact complex manifold $M$ is endowed with a Gauduchon metric with positive total Chern scalar curvature, then $\kappa(M)<0.$
\end{corollary}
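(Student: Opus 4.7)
The plan is to obtain this as a direct consequence of Theorem \ref{mainresult2} applied to the pluricanonical line bundles. Fix a positive integer $m$ and let $E := mK_M$, endowed with the Hermitian metric $h_m$ induced from $\omega_0$. Since $E$ has rank one, its mean curvature form is a single real function, so the greatest-eigenvalue function $\gamma$ defined in (\ref{greatest eigenvalue}) coincides with this function itself; in particular $\gamma$ is automatically smooth, and the smoothness hypothesis of Theorem \ref{mainresult2} is free.

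The essential step is then to identify $\gamma$ explicitly. As the paper isolates in Example \ref{example of line bundle}, the Chern curvature of $K_M$ (with the metric induced from $\omega_0$) equals $-\mathrm{Ric}(\omega_0)$, whose trace with respect to $\omega_0$ recovers $-S_{\omega_0}$, and the $m$-th tensor power scales this by $m$, giving $\gamma = -m S_{\omega_0}$, where $S_{\omega_0}$ denotes the Chern scalar curvature of $\omega_0$. I would treat this identification as the one computational point to carry out carefully, though it is a standard unpacking of the definitions in Section \ref{section2}.

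Once $\gamma = -m S_{\omega_0}$ is in hand, the integral hypothesis (\ref{integral condition}) becomes
\[
\int_M \gamma \cdot \omega_0^n \;=\; -m \int_M S_{\omega_0}\,\omega_0^n \;<\; 0,
\]
which is equivalent to the standing assumption that the total Chern scalar curvature is positive (and $m>0$). Since $\omega_0$ is Gauduchon by hypothesis and $\gamma$ is smooth, Theorem \ref{mainresult2} now yields $H^0(M, mK_M) = 0$. As $m \geq 1$ is arbitrary, this is exactly the definition of $\kappa(M) < 0$. The main obstacle was already absorbed into Theorem \ref{mainresult2} itself; the deduction of the corollary amounts to invoking the theorem correctly together with the routine computation $\gamma = -m S_{\omega_0}$.
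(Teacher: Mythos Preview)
Your proposal is correct and follows exactly the paper's own derivation: apply Theorem \ref{mainresult2} to $E=mK_M$ with the induced metric, invoke Example \ref{example of line bundle} to identify $\gamma=-mS_{\omega_0}$ (automatically smooth since $E$ is a line bundle), and conclude $H^0(M,mK_M)=0$ for every $m\geq1$. There is nothing to add or correct.
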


\begin{remark}
Theorem \ref{mainresult2} and Corollary \ref{coro of main result2} were claimed in \cite[p. 57, Thm (1.30), Coro. (1.33)]{Ko} \emph{without} the condition of the metric being Gauduchon. Note also that there is a typo where the symbol $``<"$ in \cite[Coro. (1.33)]{Ko} should be $``>"$. If they were true for \emph{any} Hermitian metric, then the positivity of the total Chern scalar curvature of \emph{any} Hermitian metric would imply the negative Kodaira dimension, which is clearly false. Indeed, using Sz\'{e}kelyhidi-Tosatti-Weinkove's recent solution to the Gauduchon conjecture on compact complex manifolds (\cite{STW}), Yang showed in \cite[Thm 1.7]{Ya3} that any compact complex manifold admits a Hermitian metric with positive total Chern scalar curvature. The mistake in the proof of \cite[Thm (1.30)]{Ko} lies in the claim in \cite[p. 57, (1.32)]{Ko}, which would be clear after the proof in Theorem \ref{mainresult2} (see Remark \ref{mistake remark}).
\end{remark}

Our third observation is motivated by the recent work in \cite{ACS2} on the so-called Chern-Einstein problems and can be stated as follows.
\begin{theorem}\label{mainresult3}
Let $\omega$ be a Hermitian metric on $M^n$ and $f\in C^{\infty}(M;\mathbb{R})$.
\begin{enumerate}
\item
Assume that $\partial\bar{\partial}(f\omega^{n-1})=0$. Then this $f$ has constant sign. If moreover $f$ is not identically zero, then $f$ is a constant if and only if $\omega$ is a Gauduchon metric.

\item
If $d(f\omega)=0$, then $f$ has constant sign. Namely, either $f\equiv0$ or $\pm f\omega$ is a K\"{a}hler metric.
\end{enumerate}
\end{theorem}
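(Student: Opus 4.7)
The plan is to treat (1) and (2) by independent arguments: part (1) reduces to the strong maximum principle once one passes to a Gauduchon representative of the conformal class, while part (2) reduces to a Gronwall-type estimate built on the pointwise injectivity of the Lefschetz map on $(1,0)$-forms (which needs $n\geq 2$).

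For Part (1), I would write $\omega=e^{u}\omega_{G}$, where $\omega_{G}$ denotes the Gauduchon representative in the conformal class of $\omega$ (unique up to a positive constant by Gauduchon's theorem, since $n\geq 2$). Then $\omega^{n-1}=e^{(n-1)u}\omega_{G}^{n-1}$, and the hypothesis $\partial\bar\partial(f\omega^{n-1})=0$ becomes $\partial\bar\partial(g\,\omega_{G}^{n-1})=0$ for $g:=fe^{(n-1)u}$. Expanding by the Leibniz rule and using $\partial\bar\partial\omega_{G}^{n-1}=0$ to kill the zeroth-order-in-$g$ term, one is left with
\[
\partial\bar\partial g\wedge\omega_{G}^{n-1}+\partial g\wedge\bar\partial\omega_{G}^{n-1}-\bar\partial g\wedge\partial\omega_{G}^{n-1}=0.
\]
Dividing this $(n,n)$-form by $\omega_{G}^{n}$ yields a linear, second-order, uniformly elliptic scalar equation for $g$ whose leading part is a positive multiple of the Chern Laplacian of $\omega_{G}$ and which, crucially, has \emph{no zeroth-order term}. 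The strong maximum principle on the compact connected $M$ therefore forces $g$ to be constant. Consequently $f=g\,e^{-(n-1)u}$ has constant sign. If $f\not\equiv 0$ then $g$ is a nonzero constant, so $f$ is itself constant if and only if $u$ is constant, which by uniqueness of the Gauduchon metric in a conformal class is equivalent to $\omega$ itself being Gauduchon.

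For Part (2), I would decompose $d(f\omega)=0$ by bidegree to obtain $\partial f\wedge\omega+f\,\partial\omega=0$ (and its conjugate). For $n\geq 2$ the Lefschetz-type map $L_{p}:=\,\cdot\wedge\omega_{p}\colon\Lambda_{p}^{1,0}\to\Lambda_{p}^{2,1}$ is injective at every point, and smoothness of $\omega$ on compact $M$ yields a uniform constant $C>0$ with $|\alpha|\leq C|\alpha\wedge\omega|$ for every $(1,0)$-form $\alpha$. On the open set $\{f\neq 0\}$ the identity above rearranges to $\theta\wedge\omega=\partial\omega$ with $\theta:=-\partial f/f$, so $|\theta|\leq C|\partial\omega|$ is uniformly bounded on $\{f\neq 0\}$; hence $|df|\leq C'|f|$ wherever $f\neq 0$. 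A Gronwall argument now rules out any zero of $f$: along any smooth curve $\gamma$ issuing from a zero $q$ of $f$, the function $t\mapsto\log|f(\gamma(t))|$ has uniformly bounded derivative on each interval where $f\circ\gamma$ is nonvanishing, and hence is uniformly continuous there---which is incompatible with it tending to $-\infty$ at an endpoint where $f\circ\gamma$ vanishes. Thus $\{f=0\}$ is both open and closed in the connected $M$, so it equals either $\emptyset$ or all of $M$. In the former case $f$ has constant sign, and $\pm f\omega$ is then a positive, $d$-closed $(1,1)$-form, i.e.\ a K\"ahler metric.

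The main obstacle I anticipate is the careful handling of the apparently singular quantity $\theta=-\partial f/f$ near the zero set of $f$ in Part (2). The uniform Lefschetz estimate $|\alpha|\leq C|\alpha\wedge\omega|$, which uses compactness of $M$ together with the fibrewise injectivity of $L_{p}$ for $n\geq 2$, is precisely what keeps $\theta$ bounded on $\{f\neq 0\}$ and supplies the constant required by Gronwall; without $n\geq 2$ this injectivity fails and the conclusion is genuinely false. By contrast, in Part (1) the only technical point is the vanishing of the zeroth-order term in the $g$-equation, which is exactly the gift of the Gauduchon hypothesis and turns a potentially subtle PDE question into an immediate application of the strong maximum principle.
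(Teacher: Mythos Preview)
Your argument is correct in both parts, but the routes differ from the paper's.

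For Part~(1), the paper observes via the Hodge star that $\partial\bar\partial(f\omega^{n-1})=0$ is literally the equation $\Delta^{\ast}_{c,\omega}f=0$, and then invokes Gauduchon's structural result (recorded earlier in the paper) that $\mathrm{Ker}(\Delta^{\ast}_{c,\omega})$ is one-dimensional with generators of constant sign, and that $\omega$ is Gauduchon precisely when the normalized positive generator is the constant $1$. Your approach---pass to the Gauduchon representative $\omega_G$, observe that $g=fe^{(n-1)u}$ satisfies a second-order elliptic equation with no zeroth-order term, and apply the strong maximum principle---is exactly the mechanism that underlies Gauduchon's theorem, so you are in effect reproving that black box by hand. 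The paper's version is shorter because it has already recorded Gauduchon's result; yours is more self-contained.

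For Part~(2) the difference is more substantial. The paper reduces to Part~(1) in one line: from $d(f\omega)=0$ one gets $\bar\partial(f\omega)=0$, hence $\partial\bar\partial\big((f\omega)^{n-1}\big)=\partial\bar\partial(f^{n-1}\omega^{n-1})=0$, and Part~(1) applied to $f^{n-1}$ shows that $f^{n-1}$ (hence $f$) has constant sign. Your Lefschetz-injectivity plus Gronwall argument is correct and entirely independent of Part~(1); it trades the algebraic trick for a direct differential-inequality estimate $|df|\le C'|f|$. The paper's reduction is slicker and makes the two parts of the theorem cohere, while your method has the virtue of being a stand-alone argument that does not route through the auxiliary function $f^{n-1}$.
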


The Chern-Ricci form $\text{Ric}(\omega)$ of $(M^n,\omega)$ is defined to be
$$\text{Ric}(\omega):=-\sqrt{-1}
\partial\bar{\partial}\log\det(\omega^n),$$
which is a closed real $(1,1)$-form and represents the first Bott-Chern class up to a factor $2\pi$: $$[\text{Ric}(\omega)]=2\pi c_1^{BC}(M).$$

The following example, which is exactly \cite[Thm A]{ACS2}, illustrates an interesting application of this result.
\begin{example}
If the Hermitian metric $\omega$ satisfies the Einstein-type equation
\be\label{Einstein type}\text{Ric}(\omega)=\lambda\omega,\qquad\lambda\in C^{\infty}(M;\mathbb{R}),\ee
the closedness of $\text{Ric}(\omega)$ and Theorem \ref{mainresult3} imply that either $\lambda\equiv0$ or $\pm\lambda\omega$ is a K\"{a}hler metric. This in particular yields that $c_1^{BC}(M)$ is definite. So when the factor $\lambda$ is not identically zero, the solution in (\ref{Einstein type}) is necessarily reduced to the classical K\"{a}hler-Einstein case. This is exactly \cite[Thm A]{ACS2}.
\end{example}

The rest of this article is organized as follows. After collecting some preliminaries in Section \ref{section2},  we recall in Section \ref{section3} some basic facts on Gauduchon metrics and prove two important lemmas related to them (Lemmas \ref{analogue of laplacian} and \ref{scalar curvature}). The proofs of Theorems \ref{mainresult0}, \ref{mainresult1}, \ref{mainresult2} and \ref{mainresult3} are presented respectively in Sections \ref{section4}, \ref{section5} and \ref{section6}.

\section*{Acknowledgements}
The author thanks Xiaokui Yang for his useful comments and suggestions, which enhance the quality of this article.

\section{Preliminary materials}\label{section2}
We briefly collect in this section some basic facts on Hermitian holomorphic vector bundles and Hermitian manifolds in the form we shall use them in this article. A thorough treatment can be found in \cite{Ko}.

Let $(E^r,h)$ be a Hermitian holomorphic vector bundle of rank $r$ on ($M^n$,$\omega$)
with Chern connection $\nabla$ and curvature tensor $$R:=\nabla^2\in\Gamma(\wedge^{1,1}T^{\ast}M\otimes E^{\ast}\otimes E).$$

Under a local frame field $\{s_1,\ldots,s_r\}$ of $E$ and local coordinates $\{z^1,\ldots,z^n\}$ on $M$, the curvature tensor $R$ and the Hermitian metrics $h$ and $\omega$ can be written locally as
\begin{eqnarray}\label{curvature tensor}
\left\{ \begin{array}{ll}
R=R^{\beta}_{i\bar{j}\alpha}dz^i\wedge d\bar{z}^j\otimes s^{\ast}_{\alpha}\otimes s_{\beta},\\
~\\
h=(h_{\alpha\bar{\beta}}):=\big(h(s_{\alpha},s_{\beta})\big),\\
~\\
\omega=\sqrt{-1}g_{i\bar{j}}dz^i\wedge d\bar{z}^j.\\
\end{array} \right.
\end{eqnarray}
Here and in what follows we always adopt the Einstein summation convention.

Let a Hermitian matrix $K$ be $$K:=(K_{\alpha\bar{\beta}}):=
(h_{\gamma\bar{\beta}}\cdot g^{i\bar{j}}\cdot
R^{\gamma}_{i\bar{j}\alpha}),\qquad \Big((g^{i\bar{j}}):=(g_{i\bar{j}})^{-1}\Big)$$
which defines a Hermitian form on the smooth sections of $E$ by
$$K(\xi,\eta):=
K_{\alpha\bar{\beta}}\xi^{\alpha}\bar{\eta^{\beta}},\qquad
\xi=\xi^{\alpha}s_{\alpha},~\eta=\eta^{\beta}s_{\beta}.$$

This $K$ is independent of the choices of $\{s_{\alpha}\}$ and $\{z^i\}$ and called the \emph{mean curvature form} of $E$ in the notation of \cite{Ko}.

For a conformal change of the metric $h$:
$$\widetilde{h}=e^uh,\qquad u\in C^{\infty}(M;\mathbb{R}),$$
direct calculations (\cite[p. 57]{Ko}) show that the mean curvature form $\widetilde{K}=(\widetilde{K}_{\alpha\bar{\beta}})$ of $\widetilde{h}$ transforms in the following manner
\be\label{transform of mean curvature form}
(\widetilde{K}_{\alpha\bar{\beta}})=e^u\Big[
\big(K_{\alpha\bar{\beta}}\big)+
(\Delta_{c,\omega}u)\big(h_{\alpha\bar{\beta}}\big)\Big].\ee
Here $\Delta_{c,\omega}(\cdot)$ is the complex Laplacian acting on smooth functions defined by
\be\label{complex laplacian}\Delta_{c,\omega}u:=-\text{tr}_{\omega}
\sqrt{-1}\partial\bar{\partial}u=-g^{i\bar{j}}
\frac{\partial^2u}{\partial z^i\partial\bar{z}^j}.\ee

Let $(M^n,\omega)$ be a compact Hermitian manifold with $$\omega=\sqrt{-1}g_{i\bar{j}}dz^i\wedge d\bar{z}^j$$ under the local coordinates $\{z_1,\ldots,z_n\}$. Following the notation in (\ref{curvature tensor}), the components of the curvature tensor $R$ of the Chern connection on the holomorphic tangent bundle $(T^{1,0}M,\omega)$ are given by
$$R_{i\bar{j}k\bar{l}}:=g_{p\bar{l}}R_{i\bar{j}k}^p=
-\frac{\partial^2g_{k\bar{l}}}{\partial z^i\partial\bar{z}^j}+g^{p\bar{q}}\frac{\partial g_{k\bar{q}}}{\partial z^i}\frac{\partial g_{p\bar{l}}}{\partial \bar{z}^j}.$$

For $p\in M$ and $v=v^i\frac{\partial}{\partial z^i}\in T_p^{1,0}M$,
the \emph{holomorphic sectional curvature} $H$ of $\omega$ \big(HSC($\omega$) for short\big) at the point $p$ and the direction $v$ is defined by
\be\label{hsc}H_p(v):=R(v,\bar{v},v,\bar{v})\big|_p:=R_{i\bar{j}k\bar{l}}\big|_p\cdot v^i\bar{v}^jv^k\bar{v}^l.\ee

$H$ is called \emph{nonnegative} if $H_p(v)\geq0$ for any pair $(p,v)$. $H$ is called \emph{quasi-positive} if it is nonnegative and $H_p(v)>0$ for some pair $(p,v)$.

The \emph{Chern scalar curvature} $S_{\omega}$ of $\omega$ is defined by
\be\label{Chern scalar curvature}S_{\omega}:=g^{i\bar{j}}g^{k\bar{l}}R_{i\bar{j}k\bar{l}},\ee
and similarly we define another scalar function $\widehat{S}_{\omega}$ by
\be\label{second scalar curvature}\widehat{S}_{\omega}:=g^{i\bar{l}}g^{k\bar{j}}R_{i\bar{j}k\bar{l}}.\ee

It is well-known that $S_{\omega}=\widehat{S}_{\omega}$ when $\omega$ is K\"{a}hler, which is (half of) the Riemann scalar curvature. But for general Hermitian metrics they may be different.

We end this section with two related examples, which shall be used in the sequel.
\begin{example}
The behavior of Chern scalar curvatures under a conformal change is as follows.
\be\label{scalar curvature change}
S_{\widetilde{\omega}}=e^{-u}(n\Delta_{c,\omega}u+S_{\omega}),\qquad \widetilde{\omega}:=e^u\omega.
\ee
\begin{proof}
\be\begin{split}
S_{\widetilde{\omega}}=\text{tr}_{\widetilde{\omega}}
\text{Ric}(\widetilde{\omega})&=-\widetilde{g}^{i\bar{j}}
\partial_i\partial_{\bar{j}}\log(\widetilde{\omega}^n)\\
&=-e^{-u}g^{i\bar{j}}\partial_i\partial_{\bar{j}}
\big[nu+\log(\omega^n)\big]\\
&=e^{-u}(n\Delta_{c,\omega}u+S_{\omega}).
\end{split}\nonumber\ee
\end{proof}
\end{example}

\begin{example}\label{example of line bundle}
Given $(M^n,\omega)$ and consider the line bundle $mK_M$ with the induced metric. By the definition of $\gamma$ in (\ref{greatest eigenvalue}) we have
$$\gamma=-mg^{i\bar{j}}R^{k}_{i\bar{j}k}=-mS_{\omega},$$
from which, together with Theorem \ref{mainresult2}, Corollary \ref{coro of main result2} follows. Applying a Weitzenb\"{o}ck's formula (\cite[p.51, Prop. (1.8)]{Ko}) to this situation yields
\be\label{weitzenbock formula}
-\Delta_{c,\omega}(\big|\sigma\big|^2_{\omega})
=\big|\nabla\sigma\big|^2_{\omega}+
mS_{\omega}\big|\sigma\big|^2_{\omega},\qquad\forall~\sigma\in H^0(M;mK_M),\ee
where $\big|\cdot\big|_{\omega}$ is the pointwise norm on $mK_M$ induced from $\omega$ and $\nabla$ the Chern connection on $mK_M$.
\end{example}

\section{Some properties of Gauduchon metrics}\label{section3}
The \emph{torsion $1$-form} $\theta$ of a Hermitian metric $\omega$ on $M^{n}$ is characterized by $$d\omega^{n-1}=\omega^{n-1}\wedge\theta$$
as the following map is an isomorphism:
$$\omega^{n-1}\wedge(\cdot):\qquad\Omega^1(M)
\stackrel{\cong}{\longrightarrow}\Omega^{2n-1}(M).$$

Clearly $\omega$ is balanced if and only if $\theta=0$. It also turns out that the condition of $\omega$ being Gauduchon can be rephrased as $d^{\ast}_{\omega}\theta=0$. Namely, $\omega$ is Gauduchon if and only if $\theta$ is coclosed with respect to $\omega$.

The usual Riemann Laplacian $\Delta_{\omega}$ is defined by
$$\Delta_{\omega}(\cdot):=d^{\ast}_{\omega}d(\cdot):\qquad C^{\infty}(M;\mathbb{R})\longrightarrow C^{\infty}(M;\mathbb{R}).$$

The two Laplacians $\Delta_{c,\omega}$ and $\Delta_{\omega}$ are related by the following
\be\label{comparison}
2\Delta_{c,\omega}u=\Delta_{\omega}u+<du,\theta>_{\omega}, \qquad~\forall~u\in C^{\infty}(M;\mathbb{R}),\ee
where $<\cdot,\cdot>_{\omega}$ is the pointwise inner product with respect to $\omega$. The equality (\ref{comparison}) is due to Gauduchon (\cite[p. 502]{Ga3}), and a detailed proof can be found in \cite[Appedix A]{ACS} or \cite[Lemma 3.2]{To}.

The original treatment of Gauduchon metrics by Gauduchon is in terms of the kernels of $\Delta_{c,\omega}$ and its formal adjoint $\Delta_{c,\omega}^{\ast}$ with respect to $\omega$. We summarize several related basic properties in the following proposition. More details can be found in \cite{Ga1}, \cite[\S 8]{Ga2} and \cite[p. 224]{LT}.
\begin{proposition}[Gauduchon]\label{Gauduchon properties}
Suppose that $(M^n,\omega)$ is a compact Hermitian manifold with $n\geq2$. Then
\begin{enumerate}
\item
$\text{dim}_{\mathbb{R}}\text{Ker}(\Delta_{c,\omega}^{\ast})=1$ and any $f\in\text{Ker}(\Delta_{c,\omega}^{\ast})$ has constant sign. This implies that there exists a \emph{unique positive} smooth function $f_0=f_0(\omega)\in\text{Ker}(\Delta_{c,\omega}^{\ast})$ such that
\begin{eqnarray}\label{f_0}
\left\{ \begin{array}{ll}
\Delta_{c,\omega}^{\ast}(f_0)=0\\
~\\
\int_Mf_0\omega^n=\int_M\omega^n.\\
\end{array} \right.
\end{eqnarray}
The metric $\omega$ is Gauduchon if and only if $f_0(\omega)\equiv1$. 

\item
For every Hermitian metric $\omega$, the metric $f_0^{\frac{1}{n-1}}\cdot\omega$ is Gauduchon. Moreover, $f_0(\lambda\omega)=f_0(\omega)$ for any $\lambda\in\mathbb{R}_{>0}$ and thus every conformal class contains a unique Gauduchon metric up to rescaling.
\end{enumerate}
\end{proposition}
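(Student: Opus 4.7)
The plan is to follow Gauduchon's original PDE approach, reducing everything to properties of the second-order elliptic operator $\Delta_{c,\omega}$ on real-valued functions. First I would observe that $\Delta_{c,\omega} u = -g^{i\bar{j}}\partial_i\partial_{\bar{j}}u$ is, on real-valued $u$, a real linear second-order elliptic operator with no zeroth-order term (since $\Delta_{c,\omega}1\equiv 0$). On the compact connected manifold $M$, Hopf's strong maximum principle forces $\text{Ker}(\Delta_{c,\omega}) = \mathbb{R}$, and the Fredholm alternative for elliptic operators on compact manifolds then yields $\dim_{\mathbb{R}}\text{Ker}(\Delta_{c,\omega}^{\ast}) = 1$.

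Next I would establish the identity linking $\Delta_{c,\omega}^{\ast}$ to $\partial\bar{\partial}$ acting on $(n-1,n-1)$-forms. Using the Lefschetz-type identity $\alpha\wedge\omega^{n-1} = \frac{1}{n}(\text{tr}_{\omega}\alpha)\omega^n$ for $(1,1)$-forms together with Stokes' theorem on the closed manifold $M$,
$$\int_M f\cdot(\Delta_{c,\omega}u)\omega^n = -n\int_M f\cdot\sqrt{-1}\partial\bar{\partial}u\wedge\omega^{n-1} = -n\int_M u\cdot\sqrt{-1}\partial\bar{\partial}(f\omega^{n-1}).$$
Since this holds for arbitrary $u$, we obtain pointwise
$$\Delta_{c,\omega}^{\ast}f\cdot\omega^n = -n\sqrt{-1}\partial\bar{\partial}(f\omega^{n-1}),$$
so $\Delta_{c,\omega}^{\ast}f = 0$ if and only if $\partial\bar{\partial}(f\omega^{n-1}) = 0$; in particular, $\omega$ is Gauduchon if and only if $1\in\text{Ker}(\Delta_{c,\omega}^{\ast})$.

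The main obstacle is the sign property: every $f\in\text{Ker}(\Delta_{c,\omega}^{\ast})$ has constant sign. A local-coordinate expansion shows $\Delta_{c,\omega}^{\ast}$ is a second-order elliptic operator of the form $\Delta_{c,\omega}f + (\text{first-order terms}) + c\cdot f$ with $c = \Delta_{c,\omega}^{\ast}1\in C^{\infty}(M;\mathbb{R})$. I would then invoke the Krein--Rutman / principal-eigenvalue theory for non-self-adjoint second-order elliptic operators on compact manifolds: the principal eigenvalue is simple, real, and admits a strictly positive eigenfunction, and by duality with $\Delta_{c,\omega}$ (whose principal eigenvalue equals $0$ on constants) the principal eigenvalue of $\Delta_{c,\omega}^{\ast}$ is also $0$. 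Hence $\text{Ker}(\Delta_{c,\omega}^{\ast}) = \mathbb{R}\cdot f_0$ with $f_0 > 0$, and after imposing $\int_M f_0\omega^n = \int_M\omega^n$ the function $f_0$ is uniquely determined. The equivalence ``$\omega$ is Gauduchon $\iff f_0(\omega)\equiv 1$'' is then immediate from the one-dimensionality of the kernel combined with the characterization in the previous paragraph.

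For part (2), set $\widetilde{\omega}:=f_0^{1/(n-1)}\omega$, so $\widetilde{\omega}^{n-1} = f_0\omega^{n-1}$; the displayed identity then gives $\partial\bar{\partial}\widetilde{\omega}^{n-1} = 0$, i.e., $\widetilde{\omega}$ is Gauduchon. The homogeneity $f_0(\lambda\omega) = f_0(\omega)$ for $\lambda\in\mathbb{R}_{>0}$ follows because $\Delta_{c,\lambda\omega}^{\ast} = \lambda^{-1}\Delta_{c,\omega}^{\ast}$ (so the kernels coincide) and both normalization integrals $\int\omega^n$ and $\int f\omega^n$ rescale by $\lambda^n$. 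Finally, uniqueness of the Gauduchon metric in a conformal class up to rescaling follows directly: if both $\omega$ and $e^v\omega$ are Gauduchon, then $\partial\bar{\partial}(e^{(n-1)v}\omega^{n-1}) = 0$ forces $e^{(n-1)v}\in\text{Ker}(\Delta_{c,\omega}^{\ast}) = \mathbb{R}$, so $v$ is constant.
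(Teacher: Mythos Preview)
The paper does not actually prove this proposition; it is stated as a classical result due to Gauduchon, with references to \cite{Ga1}, \cite{Ga2} and \cite{LT} for the details. Your proposal is a correct and complete sketch of the standard argument, so there is no discrepancy to report. Two minor remarks: (i) the identity you derive by integration by parts, namely that $\Delta_{c,\omega}^{\ast}f=0$ if and only if $\partial\bar{\partial}(f\omega^{n-1})=0$, is exactly what the paper later establishes in Section~\ref{section6} via the Hodge star (see (\ref{adjoint})), so your approach and the paper's are fully consistent; (ii) the Krein--Rutman step for the sign of the kernel is indeed the substantive point, and this is precisely what Gauduchon proves in the cited references.
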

An immediate consequence of (\ref{comparison}) is that 
the two Laplacians are the same if and only if $\theta=0$, i.e., $\omega$ is balanced. So for general Gauduchon metrics they may be different. The following lemma, which is a key ingredient in the proof of Theorem \ref{mainresult2}, says that for Gauduchon metrics $\Delta_{c,\omega}(\cdot)$ still behaves like $\Delta_{\omega}$.
\begin{lemma}[Gauduchon]\label{analogue of laplacian}
\begin{enumerate}
\item
For a Hermitian metric $\omega$ on $M$, we have
\be\label{Gauduchon integrate 0}\text{$\omega$ is Gauduchon $\Longleftrightarrow$ $\int_M(\Delta_{c,\omega}u)\omega^n=0,$~$\forall$ $u\in C^{\infty}(M;\mathbb{R})$}.\ee

\item
Let $\omega_0$ be a Gauduchon metric on $M$ and given  $f\in C^{\infty}(M;\mathbb{R})$. The equation \be\label{solution}\Delta_{c,\omega_0}u=f\ee has a solution $u\in C^{\infty}(M;\mathbb{R})$ if and only if $\int_Mf\omega^n_0=0.$ Moreover, in this case the solution $u$ is unique up to an additive constant.
\end{enumerate}
\end{lemma}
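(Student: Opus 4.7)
My plan is to start from the comparison identity (\ref{comparison}) and integrate both sides against $\omega^n$ over $M$. For the Riemannian Laplacian term, Stokes' theorem yields $\int_M (\Delta_\omega u)\,\omega^n = 0$, since $\omega^n/n!$ is the Riemannian volume form and $\Delta_\omega = d^\ast_\omega d$. For the torsion term, the $L^2$-adjointness of $d$ and $d^\ast_\omega$ gives
\begin{equation*}
\int_M \langle du,\theta\rangle_\omega\,\omega^n \;=\; \int_M u\,(d^\ast_\omega\theta)\,\omega^n .
\end{equation*}
Adding these, (\ref{comparison}) collapses to
\begin{equation*}
2\int_M (\Delta_{c,\omega}u)\,\omega^n \;=\; \int_M u\,(d^\ast_\omega\theta)\,\omega^n,\qquad\forall\,u\in C^\infty(M;\mathbb{R}).
\end{equation*}
The right-hand side vanishes for every test function $u$ if and only if $d^\ast_\omega\theta\equiv 0$, which is precisely the characterization of Gauduchon metrics recalled just before Proposition \ref{Gauduchon properties}. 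This proves (\ref{Gauduchon integrate 0}).

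\textbf{Plan for Part (2).} Necessity is immediate from Part (1): if $\Delta_{c,\omega_0}u = f$, then $\int_M f\,\omega_0^n = \int_M (\Delta_{c,\omega_0}u)\,\omega_0^n = 0$. For sufficiency I would invoke the Fredholm alternative for the elliptic second-order operator $\Delta_{c,\omega_0}$ on the compact manifold $M$, whose principal symbol is positive definite. Standard elliptic theory then says the range of $\Delta_{c,\omega_0}\colon C^\infty(M;\mathbb{R})\to C^\infty(M;\mathbb{R})$ is closed and equals the $L^2(\omega_0^n)$-orthogonal complement of $\ker\Delta_{c,\omega_0}^\ast$. Proposition \ref{Gauduchon properties}(1) gives $\dim_\mathbb{R}\ker\Delta_{c,\omega_0}^\ast = 1$, and because $\omega_0$ is Gauduchon the normalized generator is $f_0(\omega_0)\equiv 1$, so $\ker\Delta_{c,\omega_0}^\ast = \mathbb{R}\cdot 1$. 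The orthogonality condition reduces to exactly $\int_M f\,\omega_0^n = 0$.

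\textbf{Uniqueness and expected obstacle.} For uniqueness it suffices to identify $\ker\Delta_{c,\omega_0}$. Since $\Delta_{c,\omega_0}$ has no zeroth-order term, the strong maximum principle forces any solution of $\Delta_{c,\omega_0}v = 0$ to be constant; equivalently, the Fredholm index identity $\dim\ker\Delta_{c,\omega_0}=\dim\ker\Delta_{c,\omega_0}^\ast = 1$, combined with the inclusion $\mathbb{R}\subset\ker\Delta_{c,\omega_0}$, forces $\ker\Delta_{c,\omega_0} = \mathbb{R}$. The only real friction point I expect is this appeal to Fredholm theory for a non-self-adjoint operator (note $\Delta_{c,\omega_0}\neq\Delta_{c,\omega_0}^\ast$ in general, precisely because of the torsion term in (\ref{comparison})); but this is completely standard for elliptic operators of Laplace type on compact manifolds. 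Beyond a single integration of (\ref{comparison}), no further calculation is required.
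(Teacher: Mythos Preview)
Your proposal is correct and follows essentially the same approach as the paper: for Part (1) you integrate the comparison identity (\ref{comparison}) and reduce to the characterization $d^\ast_\omega\theta=0$; for Part (2) you invoke the Fredholm alternative (what the paper phrases as the Hodge decomposition $C^\infty=\mathrm{Im}(\Delta_{c,\omega_0})\oplus\ker\Delta_{c,\omega_0}^\ast$), identify $\ker\Delta_{c,\omega_0}^\ast=\mathbb{R}$ via Proposition \ref{Gauduchon properties}, and conclude uniqueness by the maximum principle. The two arguments differ only in terminology.
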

\begin{proof}
By integrating over $M$ on both sides of (\ref{comparison}) we see that
$$2\int_M(\Delta_{c,\omega}u)\omega^n=
\int_M<u,d^{\ast}_{\omega}\theta>_{\omega}
\omega^n,\qquad\forall~ u,$$
from which (\ref{Gauduchon integrate 0}) follows.

For part $(2)$,
the necessarity follows from (\ref{Gauduchon integrate 0}). For the sufficiency,
Hodge theory says that we have for each $\omega$
\be\label{hodgedecomposition}C^{\infty}(M;\mathbb{R})
=\text{Im}(\Delta_{c,\omega})
\oplus\text{Ker}(\Delta^{\ast}_{c,\omega}).\ee
In our case the metric $\omega_0$ is Gauduchon and so
Proposition \ref{Gauduchon properties} implies that $\text{Ker}(\Delta^{\ast}_{c,\omega_0})=\mathbb{R}$. Thus (\ref{hodgedecomposition}) reduces to
\be C^{\infty}(M;\mathbb{R})
=\text{Im}(\Delta_{c,\omega_0})
\oplus\mathbb{R}.\nonumber\ee
This yields the sufficient part.

For the uniqueness of $u$, we only note that $\Delta_{c,\omega}u=0$ implies the constancy of $u$ due to the maximum principle.
\end{proof}

\begin{remark}
Although this lemma is not explicitly stated in \cite{Ga1}, the materials for the proof are all contained there,  as we have seen. A sketchy proof of this result in the more generally almost-complex case is outlined in \cite[Thm 2.2]{CTW}.
\end{remark}
\begin{definition}
Let $\omega_0$ be the Gauduchon metric in $\{\omega\}$. The sign of the total Chern scalar curvature of $\omega_0$,
$\int_MS_{\omega_0}\omega^n_0$, is called the \emph{Gauduchon sign} of $\{\omega\}$. Due to the uniqueness of Gaucuchon metrics up to rescaling the Gauduchon sign is well-defined.
\end{definition}
With this notion understood, Lemma \ref{analogue of laplacian} yields the following
\begin{lemma}\label{scalar curvature}
In every conformal class $\{\omega\}$, there always exists a Hermitian metric $\widetilde{\omega}$ whose Chern scalar curvature $S_{\widetilde{\omega}}$ has constant sign, which is necessarily the same as the Gauduchon sign of $\{\omega\}$.
\end{lemma}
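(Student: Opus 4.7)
The plan is to produce, inside the conformal class, a Hermitian metric whose Chern scalar curvature is actually a constant, and then identify the sign of that constant with the Gauduchon sign. Let $\omega_0$ be the (essentially unique) Gauduchon representative of $\{\omega\}$ provided by Proposition \ref{Gauduchon properties}. Every other metric in $\{\omega\}$ can be written as $\widetilde{\omega}=e^u\omega_0$ for some $u\in C^\infty(M;\mathbb{R})$, and by the conformal change formula (\ref{scalar curvature change}) its Chern scalar curvature is
$$S_{\widetilde{\omega}}=e^{-u}\bigl(n\Delta_{c,\omega_0}u+S_{\omega_0}\bigr).$$
Since $e^{-u}>0$, the sign of $S_{\widetilde{\omega}}$ is controlled entirely by the bracketed term, so it will suffice to make that term a constant.

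With that reduction in mind, I would set
$$c:=\frac{\int_M S_{\omega_0}\,\omega_0^n}{\int_M \omega_0^n}$$
and try to solve $n\Delta_{c,\omega_0}u=c-S_{\omega_0}$. The right-hand side has vanishing integral against $\omega_0^n$ by construction, so Lemma \ref{analogue of laplacian}(2) --- whose applicability is exactly the reason we chose a Gauduchon background --- produces a smooth solution $u$, unique up to an additive constant. The metric $\widetilde{\omega}:=e^u\omega_0$ then satisfies $S_{\widetilde{\omega}}=c\,e^{-u}$, so it has constant sign everywhere (interpreted as identically zero when $c=0$).

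Finally I would observe that this sign matches the Gauduchon sign of $\{\omega\}$: by definition the Gauduchon sign is the sign of $\int_M S_{\omega_0}\omega_0^n$, which is the same as the sign of $c$, which is the pointwise sign of $S_{\widetilde{\omega}}=c\,e^{-u}$. I do not anticipate a genuine obstacle here; the only nontrivial input is the solvability statement in Lemma \ref{analogue of laplacian}(2), and the Gauduchon property of $\omega_0$ is precisely what makes that lemma applicable. The well-definedness of the Gauduchon sign (independent of the choice of $\omega_0$ within $\{\omega\}$) follows from the rescaling ambiguity noted in Proposition \ref{Gauduchon properties}(2), under which both $\int_M S_{\omega_0}\omega_0^n$ and its sign are preserved up to a positive factor.
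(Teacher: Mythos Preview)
Your construction is exactly the paper's: pass to the Gauduchon representative $\omega_0$, solve $\Delta_{c,\omega_0}u=\frac{1}{n}(c-S_{\omega_0})$ with $c$ the average of $S_{\omega_0}$ via Lemma \ref{analogue of laplacian}(2), and read off $S_{\widetilde\omega}=c\,e^{-u}$ from (\ref{scalar curvature change}). The only point you leave out is the ``necessarily'' clause as the paper intends it: the paper also checks that \emph{any} metric $\omega$ in the class with $S_\omega$ of constant sign must have that sign equal to the Gauduchon sign, using the identity $\int_M S_{\omega_0}\omega_0^n=\int_M f_0\,S_\omega\,\omega^n$ together with $f_0>0$. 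You only verify this for the particular $\widetilde\omega$ you build; the extra line is easy to add.
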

\begin{proof}
Let $\omega_0$ be the Gauduchon metric in $\{\omega\}$ and
\be\label{1}f:=-\frac{S_{\omega_0}}{n}+\frac{\int_MS_{\omega_0}\omega^n_0}{n\int_M\omega^n_0}.\ee

Note that
$$\int_Mf\omega^n_0=0.$$
So Lemma \ref{analogue of laplacian} implies that there exists a $u\in C^{\infty}(M;\mathbb{R})$ such that $\Delta_{c,\omega_0}u=f.$

Take $\widetilde{\omega}:=e^u\omega_0$. By (\ref{scalar curvature change}) we have
\be\begin{split}
S_{\widetilde{\omega}}&=e^{-u}
(n\Delta_{c,\omega_0}u+S_{\omega_0})\\
&=e^{-u}(nf+S_{\omega_0})\\
&=\frac{\int_MS_{\omega_0}\omega^n_0}{\int_M\omega^n_0}\cdot e^{-u},\qquad\big(\text{by (\ref{1})}\big)
\end{split}\nonumber\ee
which is the desired Hermitian metric.

For the necessarity, note that if $S_{\omega}$ has constant sign and $\omega_{0}:=f_0^{\frac{1}{n-1}}\omega$ is Gauduchon, then we have \big(cf.(\ref{total scalar curvature relation})\big)
$$\int_MS_{\omega_0}\omega^n_0=\int_Mf_0S_{\omega}\omega^n,$$
which has the same sign as that of $S_{\omega}$.
\end{proof}
\begin{remark}
When the Gauduchon sign is positive, Lemma \ref{scalar curvature} was treated in \cite[Thm 1.3]{Ya3}.
When the Gauduchon sign is negative, the metric $\widetilde{\omega}$ in Lemma \ref{scalar curvature} can even be chosen so that $S_{\widetilde{\omega}}$ is a (negative) constant, which is the main result in \cite[Thm 4.1]{ACS}. Note that when the Gauduchon sign is zero, \cite[Thm 3.1]{ACS} is included in Lemma \ref{scalar curvature}. It is conjectured there that this remains true when the Gauduchon sign is positive. These results as well as the conjecture can be viewed as the complex analogue of the classical Yamabe problem.
\end{remark}

\section{Proofs of Theorems \ref{mainresult0} and \ref{mainresult1}}\label{section4}
Let $(M^n,\omega)$ be a compact Hermitian manifold, and choose for each $p\in M$ a unitary basis $\{e_1,\ldots,e_n\}$ of $T^{1,0}_pM$. The proof of Theorem \ref{mainresult0} as well as Theorem \ref{mainresult1} depends on the following two lemmas.

\begin{lemma}\label{lemma1}
The nonnegativity (resp. quasi-positivity) of HSC($\omega$) implies that of $S_\omega+\widehat{S}_{\omega}$.
\end{lemma}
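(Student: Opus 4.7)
The plan is to use the classical Berger-style averaging of the holomorphic sectional curvature over the unit sphere in $T^{1,0}_p M$ and identify the result with a constant multiple of $S_\omega + \widehat{S}_\omega$.

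First, I would fix a point $p \in M$ and the unitary basis $\{e_1,\ldots,e_n\}$ of $T^{1,0}_p M$ provided in the setup. Writing $v = v^i e_i$, the definition (\ref{hsc}) becomes
\[
H_p(v) = \sum_{i,j,k,l} R_{i\bar{j}k\bar{l}}\big|_p \, v^i \bar{v}^j v^k \bar{v}^l,
\]
while in such a frame $g^{i\bar{j}} = \delta_{ij}$, so (\ref{Chern scalar curvature}) and (\ref{second scalar curvature}) specialize to
\[
S_\omega(p) = \sum_{i,k} R_{i\bar{i}k\bar{k}}\big|_p, \qquad \widehat{S}_\omega(p) = \sum_{i,k} R_{i\bar{k}k\bar{i}}\big|_p.
\]

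Next, I would invoke the standard integration formula over the unit sphere $S^{2n-1} \subset \mathbb{C}^n \cong T^{1,0}_p M$: by unitary invariance there exists a positive constant $c_n$ (depending only on $n$) such that
\[
\int_{S^{2n-1}} v^i \bar{v}^j v^k \bar{v}^l \, d\sigma = c_n\big(\delta_{ij}\delta_{kl} + \delta_{il}\delta_{kj}\big).
\]
Plugging this into the expression for $H_p(v)$ and integrating gives
\[
\int_{S^{2n-1}} H_p(v)\, d\sigma(v) = c_n \sum_{i,k}\big(R_{i\bar{i}k\bar{k}} + R_{i\bar{k}k\bar{i}}\big)\big|_p = c_n\bigl(S_\omega(p) + \widehat{S}_\omega(p)\bigr).
\]

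From this identity both assertions follow immediately. If $H_p(v) \geq 0$ for every $(p,v)$, the left side is nonnegative for every $p$, so $S_\omega + \widehat{S}_\omega \geq 0$ pointwise. For the quasi-positive statement, pick a point $p_0$ and a vector $v_0$ with $H_{p_0}(v_0) > 0$; by continuity of $R$ there is an open neighborhood of $v_0/|v_0|$ in $S^{2n-1}$ on which $H_{p_0}$ is strictly positive (while remaining nonnegative elsewhere by hypothesis), so the integral at $p_0$ is strictly positive, forcing $S_\omega(p_0) + \widehat{S}_\omega(p_0) > 0$. I do not foresee a serious obstacle: the entire argument is pointwise and the only delicate input is the unitary-invariant moment computation on $S^{2n-1}$, which is standard.
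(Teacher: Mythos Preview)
Your proof is correct and follows essentially the same approach as the paper: both use the Berger averaging identity $\int_{S^{2n-1}} v^i\bar v^j v^k\bar v^l\,d\sigma = c_n(\delta_{ij}\delta_{kl}+\delta_{il}\delta_{kj})$ (the paper gives the explicit constant $c_n=\text{Vol}(S^{2n-1})/n(n+1)$) to identify the sphere average of $H_p$ with a positive multiple of $S_\omega(p)+\widehat S_\omega(p)$. Your explicit treatment of the quasi-positive case via continuity on $S^{2n-1}$ is a small elaboration the paper leaves implicit.
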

\begin{proof}
We apply a classical trick usually attributed to Berger to average HSC($\omega$) of unit lengths at $p$, which was first used to show that the sign of HSC of a \emph{K\"{a}hler} metric determines that of scalar curvature.
\be\label{average}
\begin{split}
&\int_{v\in T^{1,0}_pM,|v|=1}H_p(v)d\theta(v)
\qquad\big(\text{$d\theta(v)$: spherical measure on $\mathbb{S}^{2n-1}$}\big)\\
=&\int_{v\in T^{1,0}_pM,|v|=1}R(e_i,\overline{e_j},e_k,\overline{e_l})
v^i\overline{v^j}v^k\overline{v^l}d\theta(v)\qquad(v=\sum v^ie_i)\\
=&R(e_i,\overline{e_j},e_k,\overline{e_l})\cdot
\frac{\delta_{ij}\delta_{kl}+\delta_{il}\delta_{kj}}{n(n+1)}
\cdot\text{Vol}(\mathbb{S}^{2n-1})\\
=&\frac{S_{\omega}(p)+\widehat{S}_{\omega}(p)}{n(n+1)}\cdot
\text{Vol}(\mathbb{S}^{2n-1}),\qquad\big(\text{by (\ref{Chern scalar curvature}),~(\ref{second scalar curvature})}\big)
\end{split}
\ee
where the second equality is due to the classical identity
$$\frac{1}{\text{Vol}(\mathbb{S}^{2n-1})}
\int_{\mathbb{S}^{2n-1}}v^i\overline{v^j}v^k\overline{v^l}d\theta(v)
=\frac{\delta_{ij}\delta_{kl}+\delta_{il}\delta_{kj}}{n(n+1)}.$$
\end{proof}

\begin{lemma}\label{lemma2}
The nonnegativity (resp. quasi-positivity) of $S_{\omega}+\widehat{S}_{\omega}$ implies that of the Gauduchon sign of $\omega$.
\end{lemma}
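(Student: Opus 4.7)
The plan is to transfer the pointwise hypothesis on $\omega$ to an integral statement about the unique Gauduchon representative of $\{\omega\}$ via the conformal behavior of the Chern curvature, and then to compare the two total Chern scalar curvatures using a classical integral identity of Gauduchon.

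First I would write $\omega_{0}=e^{u}\omega$ for the Gauduchon metric in $\{\omega\}$, so $e^{(n-1)u}=f_{0}(\omega)$ by Proposition~\ref{Gauduchon properties}. A direct computation of the conformal change of the Chern curvature on $T^{1,0}M$ gives
\[
R_{i\bar j k\bar l}(\omega_{0}) \;=\; e^{u}\bigl(R_{i\bar j k\bar l}(\omega)-\partial_{i}\partial_{\bar j}u\cdot g_{k\bar l}\bigr),
\]
and contracting in the two ways of (\ref{Chern scalar curvature}) and (\ref{second scalar curvature}) yields the pair
\[
S_{\omega_{0}}=e^{-u}\bigl(S_{\omega}+n\Delta_{c,\omega}u\bigr),\qquad \widehat S_{\omega_{0}}=e^{-u}\bigl(\widehat S_{\omega}+\Delta_{c,\omega}u\bigr).
\]
Multiplying by $\omega_{0}^{n}=e^{nu}\omega^{n}$ and integrating, the Laplacian terms are killed when paired against $f_{0}\omega^{n}$ since $\Delta_{c,\omega}^{\ast}f_{0}=0$ by (\ref{f_0}); hence
\[
\int_{M}(S_{\omega_{0}}+\widehat S_{\omega_{0}})\,\omega_{0}^{n}\;=\;\int_{M}f_{0}(S_{\omega}+\widehat S_{\omega})\,\omega^{n}\;\ge\;0,
\]
with strict inequality under the quasi-positive hypothesis because $f_{0}>0$.

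To finish, I would invoke the classical identity of Gauduchon that for any Gauduchon metric $\omega_{0}$,
\[
\int_{M}S_{\omega_{0}}\,\omega_{0}^{n}\;=\;\int_{M}\widehat S_{\omega_{0}}\,\omega_{0}^{n};
\]
both sides equal the Bott--Chern/Aeppli pairing $2\pi n\,c_{1}^{BC}(M)\cdot\{\omega_{0}^{n-1}\}_{A}$, which is well-defined precisely because $\omega_{0}^{n-1}$ is $\partial\bar\partial$-closed. Combining with the previous display,
\[
2\int_{M}S_{\omega_{0}}\,\omega_{0}^{n}\;=\;\int_{M}(S_{\omega_{0}}+\widehat S_{\omega_{0}})\,\omega_{0}^{n}\;\ge\;0,
\]
strict in the quasi-positive case. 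Hence the Gauduchon sign of $\{\omega\}$ is nonnegative (respectively positive), as required.

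The main obstacle is the identity $\int_{M}S_{\omega_{0}}\omega_{0}^{n}=\int_{M}\widehat S_{\omega_{0}}\omega_{0}^{n}$ for Gauduchon $\omega_{0}$. This amounts to rewriting $(S_{\omega_{0}}-\widehat S_{\omega_{0}})\omega_{0}^{n}$ as $n$ times $\omega_{0}^{n-1}$ wedged with the difference of the two $(1,1)$-form ``Chern--Ricci'' traces of the Chern curvature, unpacking this difference in terms of the Chern torsion, and then using $\partial\bar\partial\omega_{0}^{n-1}=0$ together with Stokes' theorem to show the integral vanishes.
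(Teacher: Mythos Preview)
Your first step is correct and in fact recovers exactly the paper's relations~(\ref{total scalar curvature relation}): the conformal change formulas for $S$ and $\widehat S$ are right, and killing the Laplacian terms via $\Delta_{c,\omega}^{\ast}f_{0}=0$ gives
\[
\int_{M}(S_{\omega_{0}}+\widehat S_{\omega_{0}})\,\omega_{0}^{n}=\int_{M}f_{0}(S_{\omega}+\widehat S_{\omega})\,\omega^{n}.
\]

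The gap is in the second step. The identity you invoke,
\[
\int_{M}S_{\omega_{0}}\,\omega_{0}^{n}=\int_{M}\widehat S_{\omega_{0}}\,\omega_{0}^{n}\qquad\text{for Gauduchon }\omega_{0},
\]
is \emph{false} in general. The scalar $\widehat S_{\omega_{0}}$ is the trace of the ``third'' Chern--Ricci form $g^{k\bar j}R_{i\bar jk\bar l}$, which is neither $d$-closed nor a representative of $2\pi c_{1}^{BC}(M)$, so your cohomological argument does not apply to it. The actual Gauduchon identity (cited in the paper as~(\ref{inequality}), from \cite[p.~501]{Ga3}) is
\[
\int_{M}(S_{\omega_{0}}-\widehat S_{\omega_{0}})\,\omega_{0}^{n}=\frac{1}{2}\int_{M}|\theta_{0}|^{2}\,\omega_{0}^{n},
\]
which is strictly positive whenever $\omega_{0}$ is not balanced. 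Fortunately this inequality points in the right direction: combining it with your first display yields
\[
\int_{M}S_{\omega_{0}}\,\omega_{0}^{n}=\frac{1}{2}\int_{M}f_{0}(S_{\omega}+\widehat S_{\omega})\,\omega^{n}+\frac{1}{4}\int_{M}|\theta_{0}|^{2}\,\omega_{0}^{n}\ge 0,
\]
which is precisely the paper's formula~(\ref{hsc implies gauduchon}). So your strategy is essentially the paper's, but the key lemma you quote must be replaced by the torsion identity above. Note also that the paper needs this sharper equality (not merely the sign) later in the proof of Theorem~\ref{mainresult0}, where the vanishing of the $|\theta_{0}|^{2}$ term characterizes the conformally balanced case.
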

\begin{proof}
Let $\omega_0:=f_0^{\frac{1}{n-1}}\omega$ be the Gauduchon metric in $\{\omega\}$ and $\theta_0$ its torsion $1$-form. The two total scalar curvatures $S_{(\cdot)}$ and $\widehat{S}_{(\cdot)}$ of $\omega_0$ and $\omega$ are related by (\cite[(1.7)]{Ba1})
\begin{eqnarray}\label{total scalar curvature relation}
\left\{ \begin{array}{ll}
\int_MS_{\omega_0}\omega_0^n=
\int_Mf_0S_{\omega}\omega^n\\
~\\
\int_M\widehat{S}_{\omega_0}\omega_0^n=\int_Mf_0\widehat{S}_{\omega}\omega^n,\\
\end{array} \right.
\end{eqnarray}
and moreover (\cite[p. 501, Coro. 2]{Ga3})
\be\label{inequality}
\int_M(S_{\omega_0}-\widehat{S}_{\omega_0})\omega^n_0
=\frac{1}{2}\int_M|\theta_0|^2\omega^n_0.\ee
Therefore
\be\label{hsc implies gauduchon}
\begin{split}
\int_MS_{\omega_0}\omega^n_0&=
\frac{1}{2}\int_M(S_{\omega_0}+\widehat{S}_{\omega_0})\omega^n_0
+\frac{1}{2}\int_M(S_{\omega_0}
-\widehat{S}_{\omega_0})\omega^n_0
\\
&=\frac{1}{2}\int_Mf_0(S_{\omega}+\widehat{S}_{\omega})\omega^n
+\frac{1}{4}\int_M|\theta_0|^2\omega^n_0,\qquad\big(\text{by (\ref{total scalar curvature relation}), (\ref{inequality})}\big)
\end{split}\ee
from which as well as the positivity of $f_0$ Lemma \ref{lemma2} follows.
\end{proof}
We are now ready to prove Theorem \ref{mainresult0}.
\begin{proof}
Still denote by $\omega_0$ the Gauduchon metric in $\{\omega\}$. Let $\widetilde{\omega}\in\{\omega\}$ be the chosen metric as in Lemma \ref{scalar curvature} so that the Chern scalar curvature $S_{\widetilde{\omega}}$ has constant sign, which is the same as that of $\int_MS_{\omega_0}\omega^n_0.$ The assumption $HSC(\omega)\geq0$ in Theorem \ref{mainresult0} implies from Lemmas \ref{lemma1} and \ref{lemma2} that $$\int_MS_{\omega_0}\omega^n_0\geq0.$$

\emph{Case $1$: $\int_MS_{\omega_0}\omega^n_0>0$.}

$S_{\widetilde{\omega}}$ is positive everywhere on $M$. Apply this $\widetilde{\omega}$ to (\ref{weitzenbock formula}) we deduce from the maximum principle of the complex Laplacian $\Delta_{c,\widetilde{\omega}}$ that $mK_M$ has no nontrivial holomorphic sections when $m$ is positive. Namely, the Kodaira dimension $\kappa(M)<0$. This in fact gives a direct proof of Corollary \ref{coro of main result2}.

\emph{Case $2$: $\int_MS_{\omega_0}\omega^n_0=0$.}

$S_{\widetilde{\omega}}\equiv0$. In this case (\ref{weitzenbock formula}) yields that
any holomorphic section $\sigma$ on $mK_M$ is $\nabla$-parallel. So either $\sigma\equiv0$ or the zero set $\text{zero}(\sigma)=\emptyset$.
If for some $m$ there exists a holomorphic section $\sigma_0$ on $mK_M$ with $\text{zero}(\sigma_0)=\emptyset$, then for any holomorphic section $\sigma$ on $mK_M$
the ratio $\frac{\sigma}{\sigma_0}$ is a well-defined holomorphic function on $M$, thus a constant. This means $$\text{dim}_{\mathbb{C}}H^0(M;mK_M)\leq1,\qquad\forall~m,$$ and so $\kappa(M)\leq0$.

We now characterize $\kappa(M)=0$. From the proof above the case $\kappa(M)=0$ occurs if and only if
\begin{eqnarray}\label{equivalent conditions}
\left\{ \begin{array}{ll}
\int_MS_{\omega_0}\omega^n_0=0,\\
~\\
\text{$H^0(M;mK_M)\neq0$ for some $m$}.\\
\end{array} \right.
\end{eqnarray}

By (\ref{hsc implies gauduchon}) $\int_MS_{\omega_0}\omega^n_0=0$ is equivalent to $\omega_0$ is balanced, i.e., $\omega$ is conformally balanced, and $S_{\omega}+\widehat{S}_{\omega}\equiv0$. By (\ref{average}) $S_{\omega}+\widehat{S}_{\omega}\equiv0$ is in turn equivalent to $HSC(\omega)\equiv0$.

It suffices to show that the condition of $H^0(M;mK_M)\neq0$ for some $m$ under our situation implies that $K_M$ is a holomorphic torsion. Indeed, $\kappa(M)=0$ rules out the existence of a conformal class with positive Gauduchon sign due to Case $1$. Combining this with $\int_MS_{\omega_0}\omega^n_0=0$ implies that the first Bott-Chern class $c_1^{BC}(M)=0$ (\cite[Thm 1.1]{Ya3}). This, together with the fact $H^0(M;mK_M)\neq0$ for some $m$, yields that $K_M$ is a holomormphic torsion (\cite[Thm 1.4]{To15}).

In summary, in our situation (\ref{equivalent conditions}) is equivalent to $HSC(\omega)\equiv0$, $\omega$ is conformally balanced, and $K_M$ is a holomorphic torsion. This completes the proof of Theorem \ref{mainresult0}.
\end{proof}

Next we shall explain that why the proof above leads to Theorem \ref{mainresult1}. In fact by (\ref{hsc implies gauduchon}) any of the two conditions in Theorem \ref{mainresult1} implies that the Gauduchon sign of $\{\omega\}$ is positive, which for the manifolds under consideration is equivalent to the uniruledness (\cite[Thm D]{CRS}).

Note that \cite[Thm D]{CRS} is based on some deep results in birational geometry. Nevertheless, for our purpose only one direction is needed, whose proof is a direct application of some classical results. So we sketch the proof here for the reader's convenience. Indeed, for \emph{any} compact complex manifold $M$ the existence of a conformal class with positive Gauduchon sign is equivalent to the non-pseudo-effectiveness of $K_M$ (\cite[Thm 1.2]{La}, \cite[Thm 2.3]{Ya3}). Thus it suffices to show that the latter condition implies uniruledness for manifolds in question. Since both uniruledness and pseudo-effectiveness are bimeromorphic invariants. So we may assume that $M$ is either a projective $n$-manifold or a K\"{a}hler $3$-manifold. Then the conclusion follows from the celebrated works of \cite[Coro. 3]{BDPP} and \cite[Coro. 1.2]{Br} respectively.


\section{Proof of Theorem \ref{mainresult2}}\label{section5}
The idea of the proof is to conformally change $h$ to a new metric $\widetilde{h}$ such that the mean curvature form $\widetilde{K}$ of this new metric $\widetilde{h}$ is negative-definite and so Theorem \ref{mainresult2} follows from \cite[p. 52]{Ko}.

Set
$$f_0:=\gamma-\frac{\int_M\gamma\omega_0^n}{\int_M\omega^n_0}.$$
The assumptions in Theorem \ref{mainresult2} imply that
\begin{eqnarray}\label{condition}
\left\{ \begin{array}{ll}
f_0\in C^{\infty}(M;\mathbb{R}),\\
\gamma<f_0,\\
\int_Mf_0\cdot\omega^n_0=0.
\end{array} \right.
\end{eqnarray}

Then Lemma \ref{analogue of laplacian} tells us that there exists a $u_0\in C^{\infty}(M;\mathbb{R})$ such that \be\label{equation}\Delta_{c,\omega_0}(u_0)=-f_0.\ee

By (\ref{transform of mean curvature form}) the mean curvature form $\widetilde{K}$ of the new metric $\tilde{h}:=e^{u_0}h$ is exactly
$$(\widetilde{K}_{\alpha\bar{\beta}})=e^{u_0}\big[(K_{\alpha\bar{\beta}})
-f_0(h_{\alpha\bar{\beta}})\big],$$
which is negative-definite as $\gamma$ is the greatest eigenvalue of $(K_{\alpha\bar{\beta}})$ with respect to $(h_{\alpha\bar{\beta}})$ and $\gamma<f_0$.

\begin{remark}\label{mistake remark}
The mistake in the proof of \cite[p. 57, Thm 1.30]{Ko} is now clear. The author claimed the existence of $u_0$ as in (\ref{equation}) \big(\cite[p. 57, (1.32)]{Ko}\big) for \emph{any} Hermitian metric $\omega$ under the condition of $$\int_Mf_0\omega^n=0,$$ which is false due to the fact (\ref{Gauduchon integrate 0}).
\end{remark}

\section{Proof of Theorem \ref{mainresult3}}\label{section6}
Since
$$\Delta_{c,\omega}(f):=-\text{tr}_{\omega}
\sqrt{-1}\partial\bar{\partial}f=
<-\sqrt{-1}\partial\bar{\partial}f,\omega>_{\omega},$$
where $<>_{\omega}$ is the pointwise inner product with respect to $\omega$,
we have
\be\label{adjoint}
\begin{split}
\Delta^{\ast}_{c,\omega}(f)&=-\sqrt{-1}\partial^{\ast}
\bar{\partial}^{\ast}(f\omega)\\
&=-\sqrt{-1}(-\ast_{\omega}\cdot\bar{\partial}\cdot\ast_{\omega})
(-\ast_{\omega}\cdot\partial\cdot\ast_{\omega})(f\omega)\\
&=-\frac{\sqrt{-1}}{(n-1)!}
\ast_{\omega}\partial\bar{\partial}(f\omega^{n-1}),
\end{split}
\ee
where $\ast_{\omega}$ is the Hodge-star operator w.r.t. $\omega$ and the last equality is due to the facts that $\ast^2_{\omega}=-1$ on $(2n-1)$-forms and $$\ast_{\omega}(\omega)=\frac{1}{(n-1)!}\omega^{n-1}.$$

So the condition of $\partial\bar{\partial}(f\omega^{n-1})=0$ in Theorem \ref{mainresult3} is equivalent to $\Delta^{\ast}_{c,\omega}(f)=0$, which, together with Proposition \ref{Gauduchon properties}, yields the proof of the first part in Theorem \ref{mainresult3}.

Since $f\omega$ is a $(1,1)$-form, $d(f\omega)=0$ implies $\bar{\partial}(f\omega)=0$ and then $\partial\bar{\partial}(f^{n-1}\omega^{n-1})=0$. Thus the proof above implies that $f^{n-1}$ has constant sign and so is $f$.

\end{document}